\documentclass[pdflatex,sn-mathphys-num]{sn-jnl}% Math and Physical Sciences Numbered Reference Style
\usepackage{graphicx}%
\usepackage{multirow}%
\usepackage{amsmath,amssymb,amsfonts}%
\usepackage{amsthm}%
\usepackage{mathrsfs}%
\usepackage[title]{appendix}%
\usepackage{xcolor}%
\usepackage{textcomp}%
\usepackage{manyfoot}%
\usepackage{booktabs}%
\usepackage{listings}%
\theoremstyle{thmstyleone}%
\newtheorem{theorem}{Theorem}%  meant for continuous numbers
\newtheorem{proposition}[theorem]{Proposition}% 

\theoremstyle{thmstyletwo}%

\theoremstyle{thmstylethree}%

\begin{document}

\title[Article Title]{A note on Ursell functions for the Blume–Capel
model}

%%=============================================================%%
%% GivenName	-> \fnm{Joergen W.}
%% Particle	-> \spfx{van der} -> surname prefix
%% FamilyName	-> \sur{Ploeg}
%% Suffix	-> \sfx{IV}
%% \author*[1,2]{\fnm{Joergen W.} \spfx{van der} \sur{Ploeg} 
%%  \sfx{IV}}\email{iauthor@gmail.com}
%%=============================================================%%

\author{\fnm{Shengchun} \sur{Yu}}
%%==================================%%
%% Sample for unstructured abstract %%
%%==================================%%

\abstract{We study the sign structure of Ursell functions in the Blume–Capel model. For \(\Delta\le\log2\), we give a short proof of the alternating sign property using an Ising representation. We then construct counterexamples for every \(\Delta>\log2\), showing that the condition \(\Delta\le\log2\) is sharp.}

\maketitle

\section{Introduction}\label{sec1}
  The Blume-Capel model is a classical lattice spin system introduced independently by Blume \cite{Blu66} and Capel \cite{Cap66} in 1966 to study the magnetisation of uranium oxide and an Ising system consisting of triplet ions, respectively. The model has since become a standard example of a system with a tricritical point separating continuous and discontinuous parts of the critical curve; see \cite{BP18} and the references therein.

  Let $G=(V,E)$ be a finite graph, define configuration space $\Sigma=\{-1,0,+1\}^{V}$ the Hamiltonian of the Blume-Capel model
\[H_{G,\beta,\Delta}(\sigma)=-\beta\sum_{xy\in E}\sigma_{x}\sigma_{y}+\Delta\sum_{x\in V}\sigma_{x}^2,\quad \sigma\in \Sigma,\]
and the Gibbs measure\[\mu_{G,\beta,\Delta}(\sigma)=\frac{e^{-H_{G,\beta,\Delta}(\sigma)}}{Z_{G,\beta,\Delta}}.\]
where $\beta\ge 0$, $Z_{G,\beta,\Delta}$ is the partition function. We denote the expectations by $\langle \rangle_{G,\beta,\Delta}$.

The Ursell function \(U_k(\sigma_1,\ldots,\sigma_k)\) of \(k\) random variables \(\sigma_1,\ldots,\sigma_k\) is defined by means
of a generating function:
\begin{equation}
U_k(\sigma_1,\ldots,\sigma_k)
=
\left.
\frac{\partial^k}{\partial h_1\cdots\partial h_k}
\ln\left\langle
\exp\left(\sum_{i=1}^k h_i\sigma_i\right)
\right\rangle
\right|_{\mathbf h=0}.
\label{eq:ursell-generating}
\end{equation}
Another equivalent definition is
\begin{equation}
U_k(\sigma_1,\ldots,\sigma_k)
=
\sum_{\mathcal{P}}
(-1)^{|\mathcal{P}|-1}
\bigl(|\mathcal{P}|-1\bigr)!
\prod_{P\in\mathcal{P}}
\left\langle
\prod_{p\in P}\sigma_p
\right\rangle,
\label{eq:ursell-partition}
\end{equation}
where the summation is over all partitions \(\mathcal{P}\) of the set
\(I=\{1,\ldots,k\}\):
\[
\mathcal{P}=\{P_1,\ldots,P_r\},\qquad
|\mathcal{P}|=r,\qquad
\bigcup_{i=1}^r P_i=I,\qquad
P_i\cap P_j=\varnothing\quad (i\ne j).
\]
For spin-flip symmetric systems, including the Ising and Blume–Capel models, all odd order Ursell functions vanish, $U_{2k-1}=0$. Ursell functions $U_{k}$ are higher-order generalizations of the covariance function, which contain interactions between $k$ random variables. Shlosman proves the sign of Ising model Ursell functions $U^{\mathrm{Ising}}_{2k}$ alternates\cite{Shlosman} \[(-1)^{k+1}U_{2k}^{\mathrm{Ising}}(\sigma_1,\ldots,\sigma_{2k})\ge 0.\]
In particular, when $k=1$ the $U^{\mathrm{Ising}}_{2}\ge 0$ is the Griffiths inequality, $k=2$ the $U^{\mathrm{Ising}}_{4}\le 0$ is the Lebowitz inequality.

Griffiths--Simon construction \cite{SimonGriffiths1973} combined with Shlosman's alternating-sign theorem for ferromagnetic Ising Ursell functions \cite{Shlosman} gives the alternating sign property for the Blume--Capel model when $\Delta\leq\log 2$; see also \cite{Shlosman1987}. Whether the alternating sign property continues to hold for \(\Delta>\log2\) is unknown. In this note, we construct counterexamples throughout this regime.
\section{Results}\label{sec2}
We give a new proof that when $\Delta\le \log2$ then the sign of Blume-Capel model Ursell functions alternates, which is derived by a mapping from Blume-Capel model to Ising model on a larger graph introduced in \cite{GKP24}. We also give two counterexamples for $\Delta>\log 2$, including one with single vertex and another with distinct vertices.
\begin{theorem}
\label{thm:main}
Let $G=(V,E)$ be an arbitrary finite graph, 
\begin{enumerate}
\item[(i)]
If
\[
\Delta\leq\log2,
\]
then, for every $k\geq1$ and every
$x_1,\ldots,x_{2k}\in V$, not necessarily distinct,
\[
(-1)^{k+1}
U^{\mathrm{BC}}_{2k}
(x_1,\ldots,x_{2k})
\geq0.
\]

\item[(ii)]
The condition in {\rm (i)} is sharp in the following sense.  For every $\Delta_\star>\log2$, there exists $k\geq2$, $\beta>0$, a finite graph
$G$ with a vertex $o$ and distinct vertices $x_1,\ldots,x_{2k-1}$
satisfying
\[
\Delta=\Delta_\star,
\]

such that
\[
(-1)^{k+1}
U^{\mathrm{BC}}_{2k}
(o,x_1,\ldots,x_{2k-1})
<0.
\]
\end{enumerate}
\end{theorem}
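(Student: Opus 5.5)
For part (i), represent each Blume–Capel spin as a sum of Ising spins. At each vertex $x$, introduce two Ising spins $\tau_x^1,\tau_x^2\in\{\pm1\}$ and set $\sigma_x=(\tau_x^1+\tau_x^2)/2\in\{-1,0,1\}$. Under the uniform product measure on $(\tau^1_x,\tau^2_x)$, the value $0$ has weight $1/2$ and the values $\pm1$ have weight $1/4$ each.

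To reproduce the single-site Blume–Capel weight $e^{-\Delta\sigma_x^2}$, add a coupling $J\tau^1_x\tau^2_x$. The weights become $e^{J}/4$ for each of $\pm1$ and $e^{-J}\cdot\tfrac12$ for $0$. The ratio of the weight of $0$ to that of $\pm1$ is $2e^{-2J}$, which must equal $e^{\Delta}$. Hence $J=(\log2-\Delta)/2$, and $J\ge0$ exactly when $\Delta\le\log 2$.

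The edge term $\beta\sigma_x\sigma_y=\tfrac{\beta}{4}\sum_{i,j}\tau^i_x\tau^j_y$ is ferromagnetic. So the Blume–Capel model on $G$ is the marginal of a ferromagnetic Ising model on a graph $\tilde G$ with $2|V|$ vertices. This is the map of \cite{GKP24}, equivalently the Griffiths–Simon construction.

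By multilinearity of Ursell functions (visible from the generating function \eqref{eq:ursell-generating}),
\[
U^{\mathrm{BC}}_{2k}(x_1,\dots,x_{2k})=2^{-2k}\sum_{i_1,\dots,i_{2k}}U^{\mathrm{Ising}}_{2k}(\tau^{i_1}_{x_1},\dots,\tau^{i_{2k}}_{x_{2k}}).
\]
Shlosman's theorem \cite{Shlosman} gives $(-1)^{k+1}U^{\mathrm{Ising}}_{2k}\ge0$ for each term, since it holds for repeated sites as well. This proves (i). The only check needed is that Shlosman's result covers arbitrary ferromagnetic pair couplings and coinciding arguments.

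For part (ii), the starting point is that for $\Delta>\log2$ the single-site distribution of $\sigma$ is $p\,\delta_0+\tfrac{1-p}{2}(\delta_1+\delta_{-1})$ with $p=2/(2+e^{\Delta})<1/2$. Conversely, $\Delta>\log 2$ gives $p<1/2$.

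First, at a single vertex ($\beta=0$ or an isolated site) the cumulants of $\sigma$ are $\kappa_{2k}=\partial^{2k}_h\log(p+(1-p)\cosh h)|_0$. For $p<1/2$, the function $\log(p+(1-p)\cosh h)$ is not of the "Lee–Yang/Ising" type. Its Taylor coefficients should violate the alternating pattern at some order. For example, $\kappa_4=(1-p)-3(1-p)^2$ becomes positive when $1-p<1/3$, but that corresponds to $p>2/3$, the wrong regime. So the single-site violation must come from higher orders. I would study $f(h)=\log(1+(1-p)(\cosh h-1))$ and locate its complex singularities. The alternating sign property of all $\kappa_{2k}$ is equivalent to $f$ being of the form $\sum$ (log of Laguerre–Pólya-type factors), i.e. $p+(1-p)\cosh h$ having only purely imaginary zeros. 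That holds iff $|p/(1-p)|\le1$, i.e. $p\le1/2$. When $p<1/2$, wait, this reverses: zeros of $\cosh h=-p/(1-p)$ are purely imaginary iff $p/(1-p)\le1$. I need to recheck which regime fails, and I expect it to be the one matching $\Delta>\log2$ after correcting $p$.

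Having identified an order $2k$ where $(-1)^{k+1}\kappa_{2k}<0$ at a single site, the next step is to spread the arguments over distinct vertices. Take a star graph with center $o$ and leaves $x_1,\dots,x_{2k-1}$, with small $\beta$. Expand $U_{2k}(o,x_1,\dots,x_{2k-1})$ in $\beta$. The leading term is of order $\beta^{2k-1}$, with coefficient $\kappa_{2k}(\sigma_o)\cdot\prod\langle\sigma_{x_j}^2\rangle_0$ times a positive constant. This is because each leaf connects only through $o$, and the truncated expectation linearizes each leaf to $\beta\sigma_o\sigma_{x_j}$. Hence for small $\beta$ the sign is that of $\kappa_{2k}$, which gives the counterexample.

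The main obstacle is the single-site step: establishing, for every $\Delta>\log2$, some $k$ with the wrong sign. I would do this via the asymptotics of $\kappa_{2k}$, which are governed by the nearest complex zero of $p+(1-p)\cosh h$. When that zero is off the imaginary axis, the coefficients oscillate with a non-$\pi$ phase, which forces infinitely many sign violations.
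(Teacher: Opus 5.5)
Your part (i) is the paper's argument: the same lift, the same $K=(\log2-\Delta)/2$, multilinearity, then Shlosman. Part (ii) also uses the paper's two ingredients. However, the single-site step is left unresolved because of a labeling error.

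The number $2/(2+e^{\Delta})=2e^{-\Delta}/(1+2e^{-\Delta})$ is $P(\sigma_x\neq0)$, which is the paper's $q$, not $P(\sigma_x=0)$. The law is $(1-q)\delta_0+\frac{q}{2}(\delta_1+\delta_{-1})$ and $\langle e^{h\sigma}\rangle=1-q+q\cosh h$. Two things follow:
\begin{enumerate}
\item $\kappa_4=q(1-3q)>0$ for $\Delta>\log4$, so your $\kappa_4$ example is in the right regime after all.
\item The zeros solve $\cosh h=-(1-q)/q<-1$ exactly when $q<1/2$, i.e.\ $\Delta>\log2$. They are $\pm a+(2\ell+1)\pi i$ with $a=\operatorname{arcosh}((1-q)/q)>0$.
\end{enumerate}

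Your asymptotic sketch also needs sharpening. There is not one nearest zero but four, $\pm(a\pm\pi i)$, all on $|z|=R=\sqrt{a^2+\pi^2}$; the next ones have modulus $\sqrt{a^2+9\pi^2}$. Splitting off $\log(1-z^2/\rho^2)+\log(1-z^2/\bar\rho^2)$ with $\rho=a+\pi i$ gives
\[
(-1)^{m+1}\kappa_{2m}=\frac{2(2m)!}{mR^{2m}}\bigl[\cos(m\delta)+o(1)\bigr],\qquad \delta=2\arctan(a/\pi)\in(0,\pi).
\]
Because the step $\delta$ is less than $\pi$, $m\delta$ falls infinitely often (mod $2\pi$) into an arc where $\cos\le-c<0$. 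This uniform negativity, not mere \emph{oscillation}, is what beats the error term, and it yields some $m\ge2$.

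For the star graph, your leading coefficient is correct, but ``the truncated expectation linearizes each leaf'' is only a heuristic. You have not shown that all orders below $\beta^{2k-1}$ vanish, nor actually computed the coefficient.

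The paper needs no expansion. Conditionally on $S=\sigma_o$ the leaves are independent. Since $S\in\{-1,0,1\}$, one has exactly $\mathbb E[X_j\mid S]=r_\varepsilon S$ with $r_\varepsilon=2e^{-\Delta}\sinh\varepsilon/(1+2e^{-\Delta}\cosh\varepsilon)$. Each leaf occurs once, so every block moment in the partition formula \eqref{eq:ursell-partition} factors. This gives the exact identity $U_{2m}(o,x_1,\dots,x_{2m-1})=r_\varepsilon^{2m-1}\kappa_{2m}(p_\varepsilon)$ with $p_\varepsilon\to q$, and continuity finishes the proof. The identity also confirms your constant: $r_\varepsilon=\varepsilon q+O(\varepsilon^3)$ with $q=\langle\sigma_x^2\rangle_0$.
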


\section{Proof of Theorem 1(i)}\label{sec3}
 We use the representation introduced in \cite{GKP24} for the Blume–Capel model on \(\mathbb Z^d\). The same construction applies to arbitrary finite graphs. Let \(G=(V,E)\) be a finite graph.  We lift \(G\) to the graph
\[
\ell(G)=(\ell(V),\ell(E)),
\qquad
\ell(V)=V\times\{0,1\},
\]
where
\[
E_1=
\bigcup_{xy\in E}
\bigcup_{i,j=0}^{1}
\bigl\{\{(x,i),(y,j)\}\bigr\},
\qquad
E_2=
\bigcup_{x\in V}
\bigl\{\{(x,0),(x,1)\}\bigr\},
\]
and
\[
\ell(E)=E_1\cup E_2.
\]

For \((x,i)\in\ell(V)\), write the Ising spin as
\(\tau_x^i\in\{-1,+1\}\). Define
\[
K:=\frac{-\Delta+\log 2}{2},
\]
and let \(J\) be given by
\[
J_{(x,i),(y,j)}=\frac{\beta}{4},
\qquad xy\in E,\quad i,j\in\{0,1\},
\]
and
\[
J_{(x,0),(x,1)}=K.
\]
Let \(\mu_{\ell(G),J}^{\mathrm{Ising}}\) denote the Ising measure on
\(\ell(G)\) with coupling constants \(J\). Define
\[
T:\{-1,+1\}^{\ell(V)}
\longrightarrow
\{-1,0,1\}^{V}
\]
by
\[
T(\tau)_x
=
\frac{\tau_x^0+\tau_x^1}{2}.
\]

\begin{proposition}[\cite{GKP24}]
For every \(\eta\in\{-1,0,1\}^{V}\),
\[
\mu_{G,\beta,\Delta}(\sigma=\eta)
=
\mu_{\ell(G),J}^{\mathrm{Ising}}
\bigl(\tau\in T^{-1}(\eta)\bigr).
\]
\end{proposition}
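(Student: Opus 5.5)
We verify the identity directly by summing Ising weights over fibres of \(T\) and comparing with the Blume--Capel weight; since both sides are probability measures, it suffices to show that for every \(\eta\in\{-1,0,1\}^V\),
\[
\sum_{\tau\in T^{-1}(\eta)} \exp\Bigl(\sum_{\{u,v\}\in\ell(E)} J_{uv}\tau_u\tau_v\Bigr) = C\, e^{-H_{G,\beta,\Delta}(\eta)}
\]
for a constant \(C>0\) independent of \(\eta\). Normalising both sides then gives the claimed equality of probabilities.

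First we handle the \(E_1\) edges. For \(xy\in E\),
\[
\sum_{i,j\in\{0,1\}}\frac{\beta}{4}\tau_x^i\tau_y^j=\frac{\beta}{4}(\tau_x^0+\tau_x^1)(\tau_y^0+\tau_y^1)=\beta\,T(\tau)_xT(\tau)_y .
\]
Hence on the fibre \(T^{-1}(\eta)\) the \(E_1\)-part of the exponent equals \(\beta\sum_{xy\in E}\eta_x\eta_y\), and this is constant on the fibre.

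Next we handle the \(E_2\) edges, which factorise over vertices. The fibre is a product over \(x\in V\) of local fibres.
\begin{itemize}
\item If \(\eta_x=\pm1\), the only choice is \(\tau_x^0=\tau_x^1=\eta_x\), so \(\tau_x^0\tau_x^1=1\) and the local factor is \(e^{K}\).
\item If \(\eta_x=0\), there are two choices, \((\tau_x^0,\tau_x^1)=(+,-)\) and \((-,+)\), each with \(\tau_x^0\tau_x^1=-1\). The local factor is \(2e^{-K}\).
\end{itemize}
The ratio of the \(\eta_x=0\) factor to the \(\eta_x=\pm1\) factor is therefore \(2e^{-2K}\). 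With \(K=(\log2-\Delta)/2\) this becomes
\[
2e^{-2K}=2e^{\Delta-\log 2}=e^{\Delta}.
\]
So the product of local factors equals \(e^{K|V|}\prod_x e^{\Delta(1-\eta_x^2)} = e^{K|V|+\Delta|V|}\, e^{-\Delta\sum_x\eta_x^2}\).

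Combining the two steps, the fibre sum equals
\[
e^{(K+\Delta)|V|}\exp\Bigl(\beta\sum_{xy\in E}\eta_x\eta_y-\Delta\sum_x\eta_x^2\Bigr) = e^{(K+\Delta)|V|}\,e^{-H_{G,\beta,\Delta}(\eta)},
\]
which is the required form with \(C=e^{(K+\Delta)|V|}\). Summing over \(\eta\) shows that \(Z^{\mathrm{Ising}}=C\,Z_{G,\beta,\Delta}\), and dividing by the partition functions yields the proposition.

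The computation is elementary. The only delicate point is the choice of \(K\), so that the degeneracy factor \(2\) of the zero state, combined with the \(E_2\) coupling, reproduces exactly \(e^{\Delta}\). No sign condition on \(K\) is needed here. The condition \(\Delta\le\log2\) is equivalent to \(K\ge0\), i.e.\ to the lifted Ising model being ferromagnetic, and it is used only later, when Shlosman's theorem is applied.
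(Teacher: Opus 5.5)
Your fibre-sum computation is correct. The $E_1$ couplings collapse to $\beta\,T(\tau)_xT(\tau)_y$, and the $E_2$ factor gives $e^{K}$ versus $2e^{-K}$, with ratio $2e^{-2K}=e^{\Delta}$. Normalising then yields the identity for every real $\Delta$, so you are right that no sign condition on $K$ is needed at this stage.

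The paper does not prove this proposition itself; it imports it from the cited construction. Your argument is the standard direct verification of weights over the fibres of $T$ behind that construction, so nothing is missing.
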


\begin{proposition}[\cite{GKP24}]
For every non-empty \(A\subset V\) and for every choice of indices
\(i_x\in\{0,1\}\), \(x\in A\),
\begin{equation}
\left\langle\prod_{x\in A}\sigma_x\right\rangle_{G,\beta,\Delta}
=
\left\langle
\prod_{x\in A}\tau_x^{i_x}
\right\rangle_{\ell(G),J}^{\mathrm{Ising}}.
\label{eq:moment-identity}
\end{equation}

\end{proposition}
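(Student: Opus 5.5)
We prove the moment identity by combining the previous proposition, which says that the law of $\sigma$ under $\mu_{G,\beta,\Delta}$ is the push-forward of $\mu^{\mathrm{Ising}}_{\ell(G),J}$ under $T$, with the symmetry of the lifted model under swapping the two layers at a single site. Since $\sigma_x = T(\tau)_x = (\tau_x^0+\tau_x^1)/2$ in distribution (jointly over $x$), we first get
\[
\Bigl\langle\prod_{x\in A}\sigma_x\Bigr\rangle_{G,\beta,\Delta}
=\Bigl\langle\prod_{x\in A}\frac{\tau_x^0+\tau_x^1}{2}\Bigr\rangle^{\mathrm{Ising}}_{\ell(G),J}
=2^{-|A|}\sum_{(j_x)\in\{0,1\}^A}\Bigl\langle\prod_{x\in A}\tau_x^{j_x}\Bigr\rangle^{\mathrm{Ising}}_{\ell(G),J},
\]
where the second step expands the product.

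The key step is to show that every term in this sum equals $\langle\prod_{x\in A}\tau_x^{i_x}\rangle$ for the prescribed indices. For a fixed $x\in V$, consider the map $\phi_x$ on $\{-1,+1\}^{\ell(V)}$ that exchanges $\tau_x^0$ and $\tau_x^1$ and leaves all other spins unchanged. This map is induced by a graph automorphism of $\ell(G)$ preserving $J$. Indeed, the edges $\{(x,i),(y,j)\}$ with $xy\in E$ all carry weight $\beta/4$ independently of $i,j$, so swapping $i\leftrightarrow 1-i$ at $x$ permutes them among themselves. The edge $\{(x,0),(x,1)\}$ is mapped to itself, and all other edges are fixed. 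Hence the Ising Hamiltonian is invariant under $\phi_x$, and so is $\mu^{\mathrm{Ising}}_{\ell(G),J}$. Applying $\phi_x$ for each $x\in A$ with $j_x\neq i_x$ (these maps commute) transforms $\prod_x\tau_x^{j_x}$ into $\prod_x\tau_x^{i_x}$ without changing the expectation. Every one of the $2^{|A|}$ terms therefore equals the same value, and the prefactor $2^{-|A|}$ cancels the count, which gives \eqref{eq:moment-identity}.

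The only point needing care, and the main obstacle, is to check that $\phi_x$ preserves all couplings, including the case where $K$ is negative (that is, $\Delta>\log 2$). The couplings are never used in any other way, so the identity holds for all real $K$; only the later application of Shlosman's theorem requires $K\ge0$. As a sanity check, for $A=\{x\}$ the identity gives $0=0$ by spin-flip symmetry. For $A=\{x,x'\}$ with $x=x'$ not allowed, since $A$ is a set, no issue arises with repeated sites. Repeated vertices in Theorem~\ref{thm:main}(i) are handled separately, by placing them on different layers; that point is not needed for this proposition.
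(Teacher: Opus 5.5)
Your argument is correct. The paper gives no proof of this proposition; it imports it from \cite{GKP24} together with the preceding push-forward identity. So there is no written argument to match, and your derivation is a valid self-contained one.

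For comparison, an equally short route works on the fibres of $T$ instead of through a symmetry. Since $\tau_x^0\tau_x^1=2T(\tau)_x^2-1$, the lifted Ising weight depends on $\tau$ only through $T(\tau)$. Hence, conditionally on $T(\tau)=\eta$, the configuration is uniform on the product set $T^{-1}(\eta)$. At sites with $\eta_x=\pm1$ both spins equal $\eta_x$, while at sites with $\eta_x=0$ each $\tau_x^{i}$ is a fair $\pm1$ variable, independently over sites. It follows that $\mathbb{E}\bigl[\prod_{x\in A}\tau_x^{i_x}\mid T(\tau)=\eta\bigr]=\prod_{x\in A}\eta_x$ for every choice of layers, and averaging over $\eta$ gives the identity.

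That version reuses the fibre computation that proves the push-forward identity in the first place, and it avoids expanding into $2^{|A|}$ monomials. Yours uses the push-forward identity purely as a black box, plus the layer-swap automorphisms. Both work for every real $K$, as you correctly observe.

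One aside in your last paragraph is inaccurate, though it plays no role in the proof. Repeated vertices cannot in general be ``placed on different layers'', since there are only two layers. Moreover, no placement extends the moment identity to multisets: $\langle(\tau_x^0)^2\rangle=1$ and $\langle\tau_x^0\tau_x^1\rangle=2\langle\sigma_x^2\rangle-1$, and neither equals $\langle\sigma_x^2\rangle<1$. In the paper, repeated vertices in Theorem~\ref{thm:main}(i) are handled through multilinearity of cumulants. That sum runs over all $2^n$ layer assignments, including those that put repeated vertices on the same layer.
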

Consequently, if
\[
\beta\geq 0,
\qquad
\Delta\le\log 2,
\]
then
\begin{equation}
U_n^{\mathrm{BC}}(x_1,\ldots,x_n)
=
\frac{1}{2^n}
\sum_{i_1,\ldots,i_n\in\{0,1\}}
U_n^{\mathrm{Ising}}
\bigl(
(x_1,i_1),\ldots,(x_n,i_n)
\bigr),
\label{eq:cumulant-lift}
\end{equation}
and, for every \(k\geq1\),
\begin{equation}
(-1)^{k+1}
U_{2k}^{\mathrm{BC}}(x_1,\ldots,x_{2k})
\geq0.
\label{eq:bc-alternating-sign}
\end{equation}
Since $\sigma_x=\frac{\tau_x^0+\tau_x^1}{2},$ the multilinearity of cumulants gives \eqref{eq:cumulant-lift}. Under the stated
conditions, all couplings of the lifted Ising model are nonnegative:
\[
\frac{\beta}{4}\geq0,
\qquad
K=\frac{-\Delta+\log2}{2}\geq0.
\]
The result of Shlosman therefore implies
\[
(-1)^{m+1}
U_{2m}^{\mathrm{Ising}}
\bigl(
(x_1,i_1),\ldots,(x_{2m},i_{2m})
\bigr)
\geq0.
\]
This proves Theorem 1(i).
\section{Proof of Theorem 1(ii)}\label{sec4}
Let $\Delta>\log 2$,
$q=q(\Delta):=\frac{2e^{-\Delta}}{1+2e^{-\Delta}}$,
and let
\[
M_q(z):=1-q+q\cosh z,
\qquad
\kappa_n(q):=
\left.\frac{\mathrm d^n}{\mathrm dz^n}\log M_q(z)\right|_{z=0}.
\]
Thus, on a one-vertex graph,
\[
U_n(\underbrace{x,\ldots,x}_{n\text{ times}})
=
\kappa_n(q).
\]

The first cumulants are
\begin{equation}
\kappa_2(q)=q,\qquad
\kappa_4(q)=q(1-3q),\qquad
\kappa_6(q)=q(1-15q+30q^2).
\label{eq:single-site-cumulants}
\end{equation}
In particular, if
\[
\Delta>\log 4
\qquad\Longleftrightarrow\qquad
q<\frac13,
\]
then
\[
U_4(x,x,x,x)=\kappa_4(q)>0,
\]
which contradicts the sign requirement \(U_4\leq0\).

We next show that, for every \(\Delta>\log2\), the alternating sign
property eventually fails at some even order.

\begin{proposition}
\label{prop:single-site-counterexample}
If \(\Delta>\log2\), then there exists \(m\geq2\) such that
\begin{equation}
(-1)^{m+1}\kappa_{2m}(q)<0.
\label{eq:single-site-sign-failure}
\end{equation}
\end{proposition}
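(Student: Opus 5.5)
The plan is to argue by contradiction, using the analytic structure of $\log M_q$ together with Pringsheim's theorem on power series with nonnegative coefficients. First note that $\Delta>\log2$ is equivalent to $2e^{-\Delta}<1$, i.e.\ $q<\tfrac12$.

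\emph{Reformulation.} Since $M_q$ is even, near $z=0$ we have
\[
\log M_q(z)=\sum_{m\ge1}\kappa_{2m}(q)\,\frac{z^{2m}}{(2m)!}.
\]
Substitute $z=it$ and set $h(t):=-\log\bigl(1-q+q\cos t\bigr)$. This gives
\[
h(t)=\sum_{m\ge1}(-1)^{m+1}\kappa_{2m}(q)\,\frac{t^{2m}}{(2m)!}.
\]
Suppose, contrary to the claim, that $(-1)^{m+1}\kappa_{2m}(q)\ge0$ for every $m\ge2$. The $m=1$ coefficient is $\kappa_2=q>0$ anyway. Then all Taylor coefficients of $h$ at the origin are nonnegative.

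\emph{Radius of convergence.} The singularities of $\log M_q$ are exactly the zeros of $M_q$, where $\cosh z=-(1-q)/q$. Because $q<\tfrac12$, we have $-(1-q)/q<-1$, so these zeros are $z=\pm a+i(2n+1)\pi$ with $a=\operatorname{arccosh}((1-q)/q)>0$. They are genuine logarithmic branch points (simple zeros, since $\sinh z\neq0$ there). Hence the series for $h$ has finite radius of convergence $R=\sqrt{\pi^2+a^2}$. In the $t$-variable the nearest singularities sit at $t=\pm\pi\pm ia$, and none of them is real.

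\emph{Contradiction via Pringsheim.} A power series with nonnegative coefficients and finite radius $R$ has a singularity at the point $t=R$ on the positive real axis. However, for $q<\tfrac12$ we have $1-q+q\cos t\ge 1-2q>0$ for all real $t$. Therefore $h$ extends analytically to a neighbourhood of the whole real line, in particular of $t=R$. This contradiction shows that $(-1)^{m+1}\kappa_{2m}(q)<0$ for some $m$, and since $\kappa_2>0$ this $m$ satisfies $m\ge2$. This proves \eqref{eq:single-site-sign-failure}.

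The only delicate point is making sure the radius is genuinely finite, i.e.\ that the zeros of $M_q$ really produce singularities of $\log M_q$ rather than removable ones. This is immediate because they are simple zeros. The rest is the standard Pringsheim argument.
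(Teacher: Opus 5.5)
Your proof is correct, and it takes a genuinely different route from the paper. The paper uses quantitative singularity analysis. It splits off the four zeros $\pm a\pm\pi i$ of $M_q$ nearest the origin and writes $\log M_q=\log(1-z^2/\rho_+^2)+\log(1-z^2/\rho_-^2)+H$, with $H$ holomorphic on a larger disc. From this it derives
\[ (-1)^{m+1}\kappa_{2m}(q)=\frac{2(2m)!}{mR^{2m}}\Bigl[\cos(m\delta)+O\bigl(m(R/r)^{2m}\bigr)\Bigr],\qquad \delta=2\arctan(a/\pi)\in(0,\pi), \]
where $r>R$ is fixed. The sign failure then follows because $\cos(m\delta)$ is bounded away below zero for infinitely many $m$. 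You replace all of this with a soft Vivanti--Pringsheim argument: nonnegative coefficients would force a singularity of $h$ at $t=R$, which is impossible because $1-q+q\cos t\ge 1-2q>0$ on $\mathbb{R}$.

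Your route needs much less. It uses only that $M_q$ has some zero and none on the imaginary axis, with no remainder estimates and no argument about $m\delta$ modulo $2\pi$. Subtracting the finitely many negative terms shows that it also yields infinitely many failing $m$. The same reasoning gives a general statement: for any even single-site law whose moment generating function $M$ is entire and has zeros, alternating signs of all cumulants force $M(iR)=0$, where $R$ is the minimal modulus of a zero. This is a partial converse to the Lee--Yang remark in the Comments section. What the paper's approach buys in return is the explicit growth rate and the oscillation pattern $\cos(m\delta)$, which describes, for large $m$, which orders fail.

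One detail you should spell out is that your continuation near $t=R$ agrees with the series sum on the overlap. On the strip $|\operatorname{Im} t|<a$ one has $\operatorname{Re}(1-q+q\cos t)>0$, so the principal logarithm is holomorphic there. On the convex set where the strip meets $\{|t|<R\}$, it coincides with the series, because both are logarithms of the same function and both vanish at $t=0$. Also, simplicity of the zeros is not needed for the radius to be finite: any zero of $M_q$ in a disc already rules out a holomorphic logarithm on that disc.
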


\begin{proof}
Since \(q<1/2\), let
\[
a:=\operatorname{arcosh}\left(\frac{1-q}{q}\right)>0.
\]
Then
\[
M_q(z)
=
q\bigl(\cosh z+\cosh a\bigr)
=
2q\cosh\left(\frac{z+a}{2}\right)
     \cosh\left(\frac{z-a}{2}\right).
\]
Hence the zeros of \(M_q\) are
\[
z=\pm a+(2\ell+1)\pi i,
\qquad \ell\in\mathbb Z.
\]
The four zeros closest to the origin are
\[
\pm\rho_+,\qquad \pm\rho_-,
\qquad
\rho_\pm:=a\pm\pi i.
\]
Write
\[
R:=|\rho_+|=\sqrt{a^2+\pi^2},
\qquad
\theta:=\arg(\rho_+)=\arctan\left(\frac{\pi}{a}\right).
\]
For every \(r\) satisfying
\[
R<r<\sqrt{a^2+9\pi^2},
\]
there exists a function \(H\), holomorphic on \(\{z:|z|<r+\epsilon\}\), where $\epsilon$ is small such
that
\[
\log M_q(z)
=
\log\left(1-\frac{z^2}{\rho_+^2}\right)
+
\log\left(1-\frac{z^2}{\rho_-^2}\right)
+
H(z).
\]
Consequently,
\begin{equation}
\frac{\kappa_{2m}(q)}{(2m)!}
=
-\frac{2}{mR^{2m}}\cos(2m\theta)
+
O(r^{-2m}).
\label{eq:cumulant-asymptotics}
\end{equation}

Set
\[
\delta:=\pi-2\theta
=
2\arctan\left(\frac{a}{\pi}\right)\in(0,\pi).
\]
Multiplying \eqref{eq:cumulant-asymptotics} by
\((-1)^{m+1}\) gives
\begin{equation}
(-1)^{m+1}\kappa_{2m}(q)
=
\frac{2(2m)!}{mR^{2m}}
\left[
\cos(m\delta)
+
O\left(m\left(\frac{R}{r}\right)^{2m}\right)
\right].
\label{eq:signed-cumulant-asymptotics}
\end{equation}
Since \(0<\delta<\pi\), there are infinitely many \(m\) for which
\(\cos(m\delta)\) is bounded above by a strictly negative constant.
For all sufficiently large such \(m\), the right-hand side of
\eqref{eq:signed-cumulant-asymptotics} is negative, proving
\eqref{eq:single-site-sign-failure}.
\end{proof}

The preceding counterexample uses only one vertex. There also exists counterexample with distinct vertices.

Let $S_{2m}$ be the star graph with centre $o$ and distinct leaves
$x_1,\ldots,x_{2m-1}$.  Assign
\[
\Delta>\log2,
\qquad
\beta=\varepsilon>0,
\]
and give every edge $\{o,x_j\}$ the same coupling $\varepsilon>0$.
Set
\[
S:=\sigma_o,
\qquad
X_j:=\sigma_{x_j}.
\]
Conditionally on $S$, the variables $X_1,\ldots,X_{2m-1}$ are independent,
and
\[
\mathbb E[X_j\mid S]=r_\varepsilon S,
\qquad
r_\varepsilon:=
\frac{2e^{-\Delta}\sinh\varepsilon}
{1+2e^{-\Delta}\cosh\varepsilon}>0.
\]
Moreover, the marginal law of $S$ is a three point distribution $\nu_{p_\varepsilon}$, $P(S=0)=1-p_\varepsilon$, $P(S=+1)=P(S=-1)=\frac{p_\varepsilon}{2}$, where
\begin{equation}
\label{eq:p-epsilon}
p_\varepsilon
=
\frac{
2e^{-\Delta}(1+2e^{-\Delta}\cosh\varepsilon)^{2m-1}
}{
\bigl(1+2e^{-\Delta}\bigr)^{2m-1}
+
2e^{-\Delta}(1+2e^{-\Delta}\cosh\varepsilon)^{2m-1}
}.
\end{equation}
Indeed, the total weight corresponding to $S=0$ is $\bigl(1+2e^{-\Delta}\bigr)^{2m-1}$, while
each of the two states $S=\pm1$ has weight
\[
e^{-\Delta}(1+2e^{-\Delta}\cosh\varepsilon)^{2m-1}.
\]

For every $I\subset\{1,\ldots,2m-1\}$, conditional independence gives
\[
\mathbb E\left[\prod_{j\in I}X_j\right]
=
r_\varepsilon^{|I|}\mathbb E[S^{|I|}],
\]
and
\[
\mathbb E\left[S\prod_{j\in I}X_j\right]
=
r_\varepsilon^{|I|}\mathbb E[S^{|I|+1}].
\]
Hence, in every term of the partition formula \eqref{eq:ursell-partition} for
\[
U_{2m}(o,x_1,\ldots,x_{2m-1}),
\]
the leaf variables contribute the common factor
$r_\varepsilon^{2m-1}$, while the remaining moment product is exactly
the corresponding term in the $2m$-th cumulant of $S$.  Therefore,
\begin{equation}
\label{eq:exact-star-cumulant}
U_{2m}(o,x_1,\ldots,x_{2m-1})
=
r_\varepsilon^{2m-1}\kappa_{2m}(p_\varepsilon).
\end{equation}

As $\varepsilon\downarrow0$,
\[
p_\varepsilon\longrightarrow
\frac{2e^{-\Delta}}{1+2e^{-\Delta}}
=q.
\]
Since $\kappa_{2m}(p)$ is continuous in $p$, it follows from
\eqref{eq:single-site-sign-failure} that, for all sufficiently small
$\varepsilon>0$,
\[
(-1)^{m+1}\kappa_{2m}(p_\varepsilon)<0.
\]
Together with $r_\varepsilon>0$ and
\eqref{eq:exact-star-cumulant}, this yields
\[
(-1)^{m+1}
U_{2m}(o,x_1,\ldots,x_{2m-1})<0.
\]
\section{Comments}
Our result is naturally related to Lee–Yang theory. For the ferromagnetic Ising model, the Lee–Yang property is closely connected with the alternating sign property of Ursell functions. For the Blume–Capel model on a single vertex, the condition \(\Delta\le\log2\) is precisely the regime in which the partition function has the Lee–Yang property. It would therefore be interesting to understand to what extent the implication
\[
\text{single vertex Lee--Yang property}
\quad\Longrightarrow\quad
\text{alternating signs of Ursell functions}
\]
holds for more general single-site measures.  
\backmatter

\noindent
%%===================================================%%
%% For presentation purpose, we have included        %%
%% \bigskip command. Please ignore this.             %%
%%===================================================%%
%%===========================================================================================%%
%% If you are submitting to one of the Nature Portfolio journals, using the eJP submission   %%
%% system, please include the references within the manuscript file itself. You may do this  %%
%% by copying the reference list from your .bbl file, paste it into the main manuscript .tex %%
%% file, and delete the associated \verb+\bibliography+ commands.                            %%
%%===========================================================================================%%

%% The bibliography is included directly in this source file so that the
%% citation links point to the entries below.

\end{document}